\newtheorem{thm}{Theorem}
\newtheorem{lem}[thm]{Lemma}
\newtheorem{cor}[thm]{Corollary}
\theoremstyle{definition}
\begin{document}

	\title{\bf Spectral radius conditions for  fractional $[a,b]$-covered graphs}
	\author{{Junjie Wang$^{a}$, Jiaxin Zheng$^{a,}$\footnote{Corresponding author.}\setcounter{footnote}{-1}\footnote{\emph{E-mail address:} w2451329132@163.com (J. Wang), fabulousxjz@163.com (J. Zheng), C2202840320@163.com (Y. Chen).}, \  Yonglei Chen$^{b}$ }\\[2mm]
		\small $^{a}$School of Mathematics, East China University of Science and Technology\\
		\small Shanghai 200237, China \\
		\small $^{b}$Institute of Applied Mathematics, Lanzhou Jiaotong University\\
		\small Lanzhou 730070, China}

	\date{}
	\maketitle
	{\flushleft\large\bf Abstract }  A graph $G$ is called fractional $[a,b]$-covered if for every edge $e$ of $G$ there is a fractional $[a,b]$-factor with the indicator function $h$ such that $h(e)=1$. In this paper, we provide tight spectral radius conditions for graphs being fractional $[a,b]$-covered. 
	
	\begin{flushleft}
		\textbf{Keywords:} Spectral radius;   fractional $[a,b]$-factor; fractional  $[a,b]$-covered.
	\end{flushleft}

	\section{Introduction}
	All graphs considered in this paper are simple and undirected. Let $G$ be a graph with vertex set $V(G)$ and  edge set $E(G)$. Let $e(G):=|E(G)|$ denote the number of edges in $G$.  For any $v\in V(G)$, let $d_G(v)$ denote the degree of $v$ in $G$, $N_G(v)$  denote the set of vertices adjacent to $v$ in $G$, and $E_G(v)$ denote the set of edges incident with $v$ in $G$.  For any vertex subset $S\subseteq V(G)$, we denote by $G[S]$ the subgraph of $G$ induced by $S$, and $e(S):=e(G[S])$. Also, we denote by $e(S,T)$ the number of edges between two disjoint subsets $S$ and $T$ of $V(G)$.  A vertex set $S\subseteq V(G)$ is called \emph{independent} if any two vertices in $S$ are non-adjacent in $G$. The \textit{join} of two graphs $G_1$ and $G_2$, denoted by $G_1\nabla G_2$,  is the graph obtained from the vertex-disjoint union $G_1\cup G_2$ by adding all possible edges between $G_1$ and $G_2$.

The \textit{adjacency matrix} of $G$ is defined as $A(G)=(a_{u,v})_{u,v\in V(G)}$, where $a_{u,v}=1$ if $u$ and $v$ are adjacent in $G$, and $a_{u,v}=0$ otherwise. Let $D(G)=\mathrm{diag}\{d_G(v): v\in V(G)\}$ denote the diagonal degree matrix of $G$. Then the \textit{signless Laplacian matrix} of $G$ is defined as $Q(G)=D(G)+A(G)$.  The largest eigenvalues of  $A(G)$ and $Q(G)$ are called the \textit{spectral radius} and  \textit{signless Laplacian spectral radius} of $G$, and denoted by $\rho(G)$ and $q(G)$, respectively. For some basic  bounds of $\rho(G)$ and $q(G)$, we refer the reader to \cite{CS,CS2,CS3,Z,H,HZ05,NP,STE,WXH}, and references therein.

Let $f$ and $g$ be two integer-valued functions defined on $V(G)$ such that $0 \le f(x) \le g(x)$ for all $x \in V (G)$, and let  $h : E(G) \rightarrow [0, 1]$ be a function defined on $E(G)$ satisfying $f(x) \le \sum_{e \in E_G(x)}h(e) \le g(x)$  for all $x \in V (G)$. Setting $F_{h} = \{e: e \in E(G),h(e) > 0\}$. Then the subgraph of $G$ with vertex set $V(G)$ and edge set $F_h$, denoted by  $G[F_h]$, is called a \textit{fractional $(f, g)$-factor} of $G$ with indicator function $h$. If for each edge $e$ of $G$, there is a fractional $(f, g)$-factor with the indicator function $h$, such that $h(e)=1$, then $G$ is called a  \textit{fractional $(f,g)$-covered graph}. In particular, if $f(x)=a$ and $g(x)=b$  for all $x \in V(G)$ ($a,b$ are positive integers with $a\leq b$),  then a fractional $(f, g)$-factor is called  a \textit{fractional $[a,b]$-factor}, and a fractional $(f,g)$-covered graph is called a \textit{fractional $[a,b]$-covered graph}. For more notions  about factors of graphs, see \cite{GLI,MS,HH,ST,Z3,Z4,SFL,L1,L2,DAE,ND,J}. 

In \cite{LYZ}, Li, Yan and Zhang introduced the concept of fractional $(f, g)$-covered graphs, and gave a necessary and sufficient condition for a graph being fractional $(f, g)$-covered. As an immediate corollary, we obtain the following result.

\begin{thm}(Li, Yan and Zhang \cite{LYZ})\label{thm::1}
		Let $b \ge a \ge 1$ be two integers. Then a graph $G$ is fractional  $[a, b]$-covered if and only if
		\begin{equation}\label{equ::1}
		    \delta_{G}(S,T)=b|S|-a|T|+\sum_{x\in T}d_{G-S}(x)\ge \varepsilon(S)
		\end{equation}
		for every vertex subset $S$ of $G$, where $T=\{x:x\in V(G)\backslash S, d_{G-S}(x)\le a\}$ and $\varepsilon(S)$ is defined by
		\begin{equation}\label{equ::2}
		\varepsilon(S)=\left\{
		\begin{array}{ll}
			2,&\mbox{if $S$ is not independent},\\
			1,&\mbox{if $S$ is independent, and there exists $e=uv\in E(G)$ with}\\
			&\mbox{$u \in S$, $v \in T$ and $d_{G-S}(v)=a$, or $e_{G}(S, V(G)\backslash (S\cup T))\ge 1$,}\\
			0,&\mbox{otherwise}.
		\end{array}
		\right.
		\end{equation}
	\end{thm}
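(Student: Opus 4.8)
The plan is to obtain Theorem~\ref{thm::1} as a direct specialization of the general characterization of fractional $(f,g)$-covered graphs proved by Li, Yan and Zhang in \cite{LYZ}, taking the constant functions $f(x)\equiv a$ and $g(x)\equiv b$ for every $x\in V(G)$. The entire content of the corollary is to verify that, under this substitution, their general criterion reduces term by term to the inequality~\eqref{equ::1} with the indicated set $T$ and correction term $\varepsilon(S)$.

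First I would recall their general criterion: $G$ is fractional $(f,g)$-covered if and only if
\[
\sum_{x\in S}g(x)-\sum_{x\in T}f(x)+\sum_{x\in T}d_{G-S}(x)\ge \epsilon(S)
\]
for every $S\subseteq V(G)$, where $T$ collects those vertices $x\in V(G)\setminus S$ whose degree in $G-S$ does not exceed the lower bound $f(x)$, and $\epsilon(S)$ is an edge-correction term determined by whether $S$ is independent and by the existence of certain edges leaving $S$. Next I would carry out the substitution. Since $g$ is constant, $\sum_{x\in S}g(x)=b|S|$; since $f$ is constant, $\sum_{x\in T}f(x)=a|T|$; and the defining inequality $d_{G-S}(x)\le f(x)$ for membership in $T$ becomes simply $d_{G-S}(x)\le a$, exactly as in the statement. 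Combining these yields the left-hand side $b|S|-a|T|+\sum_{x\in T}d_{G-S}(x)=\delta_{G}(S,T)$, so~\eqref{equ::1} is precisely the specialized general inequality.

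It then remains to verify that the general term $\epsilon(S)$ collapses to the piecewise definition~\eqref{equ::2}. Each of the three cases transfers directly: a non-independent $S$ still forces the value $2$; for independent $S$, the boundary condition ``$d_{G-S}(v)$ equals the lower bound at $v$'' becomes $d_{G-S}(v)=a$, and the two edge conditions—an edge from $S$ to such a boundary vertex of $T$, or an edge from $S$ into $V(G)\setminus(S\cup T)$—are copied verbatim, yielding the value $1$; all remaining configurations give $0$.

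The only point requiring care, and the main thing I would double-check, is this boundary case: one must confirm that vertices with $d_{G-S}(x)=a$ are placed in $T$ (consistent with the weak inequality $d_{G-S}(x)\le a$) and are simultaneously treated as the ``$d_{G-S}(v)=a$'' vertices triggering $\varepsilon(S)=1$, so that no off-by-one discrepancy arises between the general and the specialized definitions. Since $a$ is an integer and the threshold $f\equiv a$ is met with equality exactly at these vertices, the two descriptions agree. Once this consistency is confirmed, the corollary follows immediately, with no further estimation needed.
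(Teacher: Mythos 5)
Your proposal is correct and matches the paper exactly: the paper gives no separate proof of Theorem~\ref{thm::1}, but states it as ``an immediate corollary'' of the general characterization of fractional $(f,g)$-covered graphs in \cite{LYZ}, which is precisely the specialization $f\equiv a$, $g\equiv b$ that you carry out, including the correct identification of $T$ and the case analysis for $\varepsilon(S)$.
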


Based on Theorem \ref{thm::1}, Yuan and Hao \cite{YH1} witnessed a degree condition for fractional $[a,b]$-covered graphs. 	
	\begin{thm}(Yuan and Hao\cite{YH1})\label{thm::4}
	Let $3\le a\le b$ be integers, and let $G$ be a graph of order $n$ with minimum degree not less than a+1. Suppose that $n\geq  ((a+b)(a+b-2)+a)/b$ when $a \ge 4$ and $n\geq  ((a+b)(a+b-3/2)+a)/b$ when $a=3$. 
 If $G$ satisfies $\max \{d_{G}(x), d_{G}(y) \} \ge a(n+1)/(a+b)$ for each pair of nonadjacent vertices $x$, $y$ of $G$, then $G$ is a fractional  $[a,b]$-covered graph.
\end{thm}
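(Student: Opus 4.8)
The plan is to argue by contradiction using the characterization in Theorem \ref{thm::1}. Suppose $G$ satisfies the hypotheses but is \emph{not} fractional $[a,b]$-covered. Then there is a vertex subset $S$ of $G$, with associated $T=\{x\in V(G)\setminus S: d_{G-S}(x)\le a\}$, such that
\[
\delta_G(S,T)=b|S|-a|T|+\sum_{x\in T}d_{G-S}(x)\le \varepsilon(S)-1\le 1 .
\]
Write $s=|S|$, $t=|T|$ and $\Sigma=\sum_{x\in T}d_{G-S}(x)\ge 0$. First I would clear away the degenerate configurations: if $T=\varnothing$ then $\delta_G(S,T)=bs$, which forces $s=0$ (as $b\ge 3$) and hence no violation, while $s=0$ forces $T=\varnothing$ because $\delta(G)\ge a+1$; so $s\ge 1$ and $t\ge 1$. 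Using $\Sigma\ge 0$ and $t\le n-s$ in the displayed inequality gives $(a+b)s-an\le 1$, whence
\[
s\le \frac{an+1}{a+b}<\frac{a(n+1)}{a+b}=:\theta ,
\]
the strict inequality coming from $a\ge 3>1$. Thus in any counterexample $s<\theta$.

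The engine of the proof is the following dichotomy, which is where the degree hypothesis enters. For an integer $k\ge 0$ set $T^{\le k}=\{x\in T: d_{G-S}(x)\le k\}$. If some two vertices $u,v\in T^{\le k}$ are nonadjacent, then $\max\{d_G(u),d_G(v)\}\ge\theta$ by hypothesis; as $d_G(u)\le d_{G-S}(u)+s\le k+s$ (and likewise for $v$), this yields $s\ge\theta-k$. Otherwise $T^{\le k}$ induces a clique, and then each of its vertices has at least $|T^{\le k}|-1$ neighbours inside $V(G)\setminus S$, forcing $|T^{\le k}|-1\le d_{G-S}(x)\le k$, i.e. $|T^{\le k}|\le k+1$. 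Since $s<\theta$, for every integer $k$ with $0\le k<\theta-s$ the first alternative is impossible, so $|T^{\le k}|\le k+1$ throughout this range. Ordering the values as $d_1\le\cdots\le d_t$, this ``staircase'' gives $d_j\ge j-1$ for $j$ up to about $\theta-s$, and hence a quadratic lower bound
\[
\Sigma\ \ge\ (K+1)\,t-\tfrac12 (K+1)(K+2),\qquad K=\lceil \theta-s\rceil-1,
\]
valid when $t\ge K+2$ (with the triangular bound $\Sigma\ge \binom{t}{2}$ replacing it when $t$ is small).

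Feeding this lower bound on $\Sigma$ back into $\delta_G(S,T)\le 1$, together with $t\le n-s$ and the minimum-degree estimate $s\ge a+1-d_1$ (note $d_1\le a-1$, since otherwise $\Sigma=at$ and $\delta_G(S,T)=bs\ge 3$), reduces everything to a one-variable inequality in $s$ whose coefficients depend only on $a,b,n$ through $\theta$. I expect the main obstacle to be precisely this optimisation: one must show that the resulting inequality admits no $s\in[\,a+1-d_1,\ \theta)$, and the point at which it first becomes infeasible is exactly $n=((a+b)(a+b-2)+a)/b$. The extremal configurations have $T$ nearly independent with its vertices of degree close to $\theta$, which is what makes the bound tight, so the estimates cannot be wasteful here. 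Finally, the case $a=3$ is more delicate and requires the slightly larger order threshold with $(a+b-3/2)$ in place of $(a+b-2)$, because the clique/degree trade-off in the dichotomy loses a little more when $a$ is small; I would verify this boundary computation separately by hand.
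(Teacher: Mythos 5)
First, a remark on the ground truth: this statement is quoted by the paper from Yuan and Hao \cite{YH1}; the paper itself contains no proof of it (it only notes that Yuan--Hao's proof is ``based on Theorem \ref{thm::1}''). So your attempt can only be judged on its own merits, against the strategy the paper attributes to \cite{YH1}.

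Your framework is sound and is indeed the natural route: negate via Theorem \ref{thm::1} to get $S,T$ with $bs-at+\Sigma\le\varepsilon(S)-1\le 1$; dispose of the degenerate cases ($T=\emptyset$, $s=0$) using $b\ge 3$ and $\delta(G)\ge a+1$; deduce $s<\theta:=a(n+1)/(a+b)$; and then exploit the dichotomy that for each integer $k<\theta-s$ the set $T^{\le k}$ must be a clique (two nonadjacent vertices of $T^{\le k}$ would force $\max\{d_G(u),d_G(v)\}\le k+s<\theta$, contradicting the hypothesis), hence $|T^{\le k}|\le k+1$, giving the staircase bound $\Sigma\ge (K+1)t-\tfrac12(K+1)(K+2)$. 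Each of these steps is correct as stated.

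The genuine gap is that the proof stops exactly where the theorem lives. You never carry out the final step showing that $bs-at+\Sigma\le 1$ is infeasible for all $s$ with $\max(1,\,a+1-d_1)\le s<\theta$; you explicitly defer it (``I expect the main obstacle to be precisely this optimisation,'' ``I would verify this boundary computation separately by hand''). This is not a routine verification: the hypotheses $n\ge((a+b)(a+b-2)+a)/b$ for $a\ge 4$, the stronger threshold for $a=3$, and the minimum-degree bound enter the argument only at this point, and since the theorem is sharp there is no slack for crude estimates. It is also not evident that your staircase bound, combined only with $t\le n-s$ and $s\ge a+1-d_1$, suffices at the stated thresholds. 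For instance, in the regime $\theta-s>a+1$ one gets $t\le a+1$ and $\Sigma\ge\binom{t}{2}$, and a contradiction then needs a further interplay between $s\ge 2$, $t$, $a$ and $b$ (e.g., $t=1$ must be handled through the minimum degree, not the staircase); the complementary regime $\theta-s\le a+1$ requires its own computation, and the case $a=3$ a separate one. Until that case analysis is actually done, what you have is a plausible plan with the quantitative core --- precisely the part that distinguishes this theorem from a triviality --- missing.
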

	
Very recently, Yuan and Hao \cite{YH2} presented a neighborhood union condition for a graph being fractional $[a,b]$-covered. 
	\begin{thm}(Yuan and Hao\cite{YH2})\label{thm::5}
	Let $2\le a\le b$ and $r\ge 2$ be integers, and let $G$ be a graph of order $n$ with $n>((a+b)(r(a+b)-2)+a)/b$ and $\delta(G)\ge (r-1)(a+1)^2/a$. If $G$ satisfies $\mid N_G(x_1)\cup N_G(x_2)\cup \cdots \cup N_G(x_r)\mid \ge a(n+1)/(a+b)$ for any independent subset $\{x_1,x_2,\ldots,x_r\}$ of $G$, then $G$ is a fractional  $[a,b]$-covered graph.
\end{thm}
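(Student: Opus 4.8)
The plan is to argue by contradiction through the characterization in Theorem~\ref{thm::1}. Assume $G$ meets all the hypotheses but fails to be fractional $[a,b]$-covered. Then there is a set $S\subseteq V(G)$, with $T=\{x\in V(G)\setminus S: d_{G-S}(x)\le a\}$, for which $\delta_G(S,T)\le \varepsilon(S)-1$. Since $\varepsilon(S)\le 2$, this reads
\[
b|S|-a|T|+\sum_{x\in T}d_{G-S}(x)\le 1,
\]
which I call $(\ast)$. A quick reduction removes the case $T=\emptyset$: then $(\ast)$ forces $b|S|\le 1$, so $S=\emptyset$, but $\varepsilon(\emptyset)=0$ turns $(\ast)$ into $0\le -1$, which is absurd. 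Hence $T\ne\emptyset$, and I set $d_1=\min_{x\in T}d_{G-S}(x)$. The same collapse handles $d_1=a$: then $\sum_{x\in T}d_{G-S}(x)=a|T|$, $(\ast)$ again gives $b|S|\le 1$, and we reach the same contradiction. So from now on $0\le d_1\le a-1$, and every $x\in T$ satisfies $a-d_{G-S}(x)\le a-d_1$.

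The core of the proof is to manufacture an independent set $\{x_1,\dots,x_r\}\subseteq T$ whose $(G-S)$-degrees sum to as little as possible, the target being $r$ mutually non-adjacent vertices each of degree exactly $d_1$. Because each vertex of $T$ has at most $a$ neighbours in $G-S$, the graph $G[T]$ has maximum degree at most $a$, so independent vertices can be selected greedily, at each step taking a surviving vertex of least $(G-S)$-degree and deleting its closed neighbourhood (at most $a+1$ vertices). Here is where the two numerical hypotheses are spent: the order bound $n>((a+b)(r(a+b)-2)+a)/b$, combined with $(\ast)$, forces $|T|$ to be large, while $\delta(G)\ge (r-1)(a+1)^2/a$ is calibrated exactly so that the minimum-degree layer of $T$ is big enough to survive the $r-1$ deletions and keep furnishing vertices of degree $d_1$. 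Granting such a set, the neighbourhood-union hypothesis gives
\[
\frac{a(n+1)}{a+b}\le\Bigl|\bigcup_{i=1}^r N_G(x_i)\Bigr|\le |S|+\sum_{i=1}^r d_{G-S}(x_i)=|S|+rd_1,
\]
so that $|S|\ge a(n+1)/(a+b)-rd_1$.

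It remains to clash this against an upper bound on $|S|$ read off from $(\ast)$. Using $a-d_{G-S}(x)\le a-d_1$ termwise and $|T|\le n-|S|$, inequality $(\ast)$ gives $b|S|\le (a-d_1)(n-|S|)+1$, i.e.\ $|S|\le ((a-d_1)n+1)/(a+b-d_1)$. Substituting both bounds, clearing denominators and invoking the threshold on $n$, the required contradiction boils down, when $d_1\ge 1$, to the elementary inequality $rd_1^2-2d_1+(a-1)\ge 0$, which is valid for all $r\ge2$, $a\ge2$, $d_1\ge1$; the remaining case $d_1=0$ is even more direct, since then the $(G-S)$-isolated vertices of $T$ are automatically independent and yield $|S|\ge a(n+1)/(a+b)$, contradicting $|S|\le (an+1)/(a+b)$ from $(\ast)$ because $a>1$. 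I expect the genuine difficulty to sit entirely in the middle step: guaranteeing that an independent $r$-set of minimum $(G-S)$-degree actually exists is the one place where both hypotheses are genuinely needed, and where the precise constant $(a+1)^2/a$ must be justified, since the slack permitted in $\sum_{i}d_{G-S}(x_i)$ above $rd_1$ is too small to tolerate higher-degree substitutes; the surrounding bookkeeping is routine.
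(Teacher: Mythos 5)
This statement is quoted in the paper from Yuan and Hao \cite{YH2} as background; the paper itself contains no proof of it, so your proposal has to stand on its own. Its skeleton is indeed the standard one (contradiction via Theorem~\ref{thm::1}, a lower bound on $|S|$ from the neighborhood-union hypothesis, an upper bound on $|S|$ from $\delta_G(S,T)\le\varepsilon(S)-1$), but the step you yourself flag as ``the genuine difficulty'' is a genuine gap, and the mechanism you propose for closing it is unsound. Nothing in the hypotheses forces more than \emph{one} vertex of $T$ to attain the minimum degree $d_1=\min_{x\in T}d_{G-S}(x)$. The condition $\delta(G)\ge (r-1)(a+1)^2/a$ only yields a lower bound on $|S|$, via $d_G(x)\le |S|+d_{G-S}(x)\le |S|+a$ for $x\in T$; it says nothing about the degree distribution inside $T$, so there is no ``minimum-degree layer'' of guaranteed size. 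For example, $T$ could consist of a single vertex of $(G-S)$-degree $d_1$ together with many vertices of $(G-S)$-degree exactly $a$. A greedy selection therefore only guarantees independent $x_1,\dots,x_r\in T$ with $d_1=d_{G-S}(x_1)\le\cdots\le d_{G-S}(x_r)\le a$, and your union bound degrades to $|S|\ge a(n+1)/(a+b)-\sum_i d_{G-S}(x_i)$, where the sum may exceed $rd_1$ by as much as $(r-1)(a-d_1)$. In the final clash of the two bounds this excess gets multiplied by $a+b-d_1$, producing an error of order $(r-1)(a-d_1)(a+b)$, whereas (as your own reduction to $rd_1^2-2d_1+(a-1)\ge 0$ shows) the slack available at the stated threshold on $n$ is bounded independently of $b$. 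So the contradiction evaporates exactly when the degrees are unequal; the $d_1=0$ subcase has the identical flaw, since you need $r$ vertices of $T$ isolated in $G-S$ and there may be only one.

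The known proof does not try to equalize the degrees. It keeps the nondecreasing sequence $d_1\le\cdots\le d_r$ and controls $\sum_{x\in T}(a-d_{G-S}(x))$ by partitioning $T$ into the cells $\bigl(N_{G-S}[x_i]\cap T\bigr)\setminus\bigcup_{j<i}N_{G-S}[x_j]$ together with a residual set: on the $i$-th cell every vertex has $(G-S)$-degree at least $d_i$ and the cell has at most $d_i+1$ vertices, giving per-cell contributions of the form $(a-d_i)(d_i+1)\le (a+1)^2/4$. A separate case treats the event that the greedy process exhausts $T$ before producing $r$ vertices, in which case $|T|\le (r-1)(a+1)$ and the minimum-degree hypothesis is invoked. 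This is where the constant $(a+1)^2$ is actually spent --- on absorbing the worst case of $(a-d_i)(d_i+1)$ and on the early-termination case --- not on forcing repetitions of the minimum degree. To repair your write-up you would need to replace the ``manufacture $r$ vertices of degree $d_1$'' step by this partition argument and redo the closing arithmetic with distinct $d_i$'s; as written, the proof establishes the theorem only under a substantially larger lower bound on $n$, not the stated one.
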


In this paper, by using Theorem \ref{thm::1}, we provide tight spectral radius conditions for a graph being fractional $[a,b]$-covered. For any integers $a$ and $n$ with $2\leq a\leq n$, we denote $H_{n,a}:=K_{a-1}\nabla (K_1\cup K_{n-a})$. The main results of this paper are as follows.

	\begin{thm}\label{thm::2}
		Let $b \ge a \ge 2$ be two integers, and $G$ be a graph of order $n \geq 2+\sqrt{32a^2+24a+5}$. If $\rho (G) \geq \rho (H_{n,a})$, then $G$ is a fractional $[a,b]$-covered graph unless $G\cong H_{n,a}$.
	\end{thm}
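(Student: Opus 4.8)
The plan is to argue by contradiction via the characterization in Theorem~\ref{thm::1}. Suppose $G$ is not fractional $[a,b]$-covered while $\rho(G)\ge\rho(H_{n,a})$. Then there exists a vertex subset $S\subseteq V(G)$ violating \eqref{equ::1}, i.e.\ with $\delta_G(S,T)\le\varepsilon(S)-1$ for the associated set $T=\{x\in V(G)\setminus S:d_{G-S}(x)\le a\}$. My first step is to use this structural failure to produce an explicit spanning subgraph $G'\subseteq G$ of a controlled shape on which the spectral radius is easy to bound. The inequality $\delta_G(S,T)=b|S|-a|T|+\sum_{x\in T}d_{G-S}(x)\le\varepsilon(S)-1\le 1$ forces $|S|$ to be small (since $b|S|$ must be offset by the $-a|T|$ term), and it constrains how the vertices of $T$ attach to $V(G)\setminus S$. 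Writing $s=|S|$ and $t=|T|$, I would first show $s$ is bounded by a small constant depending on $a$, and that the extremal configuration pushing $\rho$ as large as possible is precisely the one realized by $H_{n,a}=K_{a-1}\nabla(K_1\cup K_{n-a})$.

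The second step is the spectral comparison. The standard technique is to bound $\rho(G)$ from above using the edge count and the structure forced by the violating pair $(S,T)$: since $G$ is a spanning subgraph of a graph of the form $K_{s}\nabla(\text{something sparse on }T)\,\cup\,(\text{rest})$, I can embed $G$ into a graph $G^*$ that maximizes $e(G^*)$ subject to the constraint $\delta_G(S,T)\le 1$, and use monotonicity $\rho(G)\le\rho(G^*)$ together with a Rayleigh-quotient or an edge-counting bound such as $\rho(G)\le$ (a function of $e(G)$ and $n$). The key arithmetic is to verify that among all admissible $(s,t)$ the quantity $e(G^*)$, and hence $\rho(G^*)$, is maximized when $s=a-1$ and the non-$S$ part splits as one isolated-in-$G-S$ vertex plus a clique, giving exactly $H_{n,a}$; the lower bound on $n$ (namely $n\ge 2+\sqrt{32a^2+24a+5}$) is exactly what is needed to make this the dominant case and to separate it strictly from the runner-up configurations.

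The third step is to handle the boundary case where $G^*\cong H_{n,a}$ itself, showing that $\rho(G)\ge\rho(H_{n,a})$ then forces equality $G\cong H_{n,a}$, which is the stated exceptional graph. Here I would combine the strict monotonicity of $\rho$ under adding edges with a Perron-eigenvector argument: if $G\subsetneq H_{n,a}$ as a spanning subgraph then $\rho(G)<\rho(H_{n,a})$, a contradiction, so $G=H_{n,a}$; conversely one checks directly that $H_{n,a}$ fails \eqref{equ::1} (taking $S$ to be the $K_{a-1}$ part), confirming it is genuinely not fractional $[a,b]$-covered and hence a true exception rather than an artifact.

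I expect the main obstacle to be the second step: pinning down the precise extremal graph among the finitely many admissible $(s,t)$ configurations and proving the strict spectral inequalities that separate $H_{n,a}$ from all competitors. This requires careful estimates of $\rho$ for the candidate graphs, most naturally by setting up the characteristic polynomial or an equitable partition (quotient matrix) for each structured candidate and comparing the largest roots as functions of $n$ and $a$; the threshold $n\ge 2+\sqrt{32a^2+24a+5}$ almost certainly emerges as the root of the polynomial inequality guaranteeing that the $H_{n,a}$-branch dominates. The secondary difficulty is the bookkeeping around $\varepsilon(S)\in\{0,1,2\}$, since the three cases slightly shift the violating inequality and must each be shown to lead back to the same extremal structure.
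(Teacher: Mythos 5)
Your high-level plan (contradiction via Theorem \ref{thm::1}, embedding $G$ into edge-rich containers, spectral comparison by quotient matrices and edge-count bounds) is in the right spirit, but two of your three steps contain genuine gaps. In Step 1, the claim that the violating inequality alone forces $|S|$ to be bounded by a constant depending on $a$ is false: since $\sum_{x\in T}d_{G-S}(x)\ge 0$, the inequality $b|S|-a|T|+\sum_{x\in T}d_{G-S}(x)\le \varepsilon(S)-1$ only yields $bs\le at+1$, hence $s\le t$ (using $b\ge a$); both $s$ and $t$ can grow linearly in $n$. The bound that actually does the work is $t\le 2a+2$, and it is not structural but spectral: if $t\ge 2a+3$, an edge count combined with Hong's bound $\rho(G)\le\sqrt{2e(G)-n+1}$ (Lemma \ref{lem::2}) forces $\rho(G)\le n-2<\rho(H_{n,a})$, a contradiction. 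So the logic runs in the opposite direction from your plan. In Step 2, the inference ``$e(G^*)$ is maximized, and hence $\rho(G^*)$ is maximized'' is a non sequitur: more edges do not imply a larger spectral radius unless one graph is a spanning subgraph of the other. The paper never performs a global maximization over admissible $(s,t)$. Instead it splits on $t$: when $t=1$ the degree bound $d_{G-S}(x_0)\le a-1$ makes $G$ itself a spanning subgraph of $H_{n,a}$ (and here the violating configuration has $s=0$ or $s=1$ --- the quantity $a-1$ in $H_{n,a}$ is the degree of the vertex of $T$, not $|S|$, so your identification of the extremal case as $s=a-1$ is a misreading of where $H_{n,a}$ comes from); when $2\le t\le 2a+2$, one finds $x_1\in T$ with $d_{G-S}(x_1)\le a-1$ and any second vertex $x_2\in T$, and observes that their combined neighborhood outside $\{x_1,x_2\}$ has at most $(2a+2)+(a-1)+a=4a+1$ vertices, so $G$ is a spanning subgraph of the container $K_{4a+1}\nabla(K_2\cup K_{n-4a-3})$, whose spectral radius is at most $n-2$ by a quotient-matrix computation (Lemma \ref{lem::5}). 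This container for the middle range of $t$ is the key idea missing from your proposal; without it, comparing $\rho$ across the ``finitely many admissible configurations'' you mention is exactly the hard step you have not resolved.

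There is also a concrete error in Step 3: taking $S$ to be the $K_{a-1}$ part does not show that $H_{n,a}$ violates \eqref{equ::1}. With that choice, $T=\{v\}$ where $v$ is the $K_1$-vertex, and $\delta(S,T)=b(a-1)-a+0=ab-a-b$, while $\varepsilon(S)=2$ for $a\ge 3$; since $(a-1)(b-1)\ge 4$ when $a,b\ge 3$, we get $\delta(S,T)\ge 2=\varepsilon(S)$ and no violation occurs. The correct witness (Lemma \ref{lem::4}) is $S=\emptyset$, which gives $T=\{v\}$, $\varepsilon(S)=0$, and $\delta(S,T)=-a+(a-1)=-1<0$. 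This is easily fixed, but as written your verification that $H_{n,a}$ is a true exception fails for all $a\ge 3$.
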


	\begin{thm}\label{thm::3}
		Let $b \ge a \ge 2$ be two integers, and $G$ be a graph of order $n \geq 6a+5$. If $q (G) \geq q (H_{n,a})$, then $G$ is a fractional $[a,b]$-covered graph unless $G\cong H_{n,a}$.
	\end{thm}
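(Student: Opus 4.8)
The plan is to prove Theorem \ref{thm::3} by contradiction, in close parallel with the adjacency version Theorem \ref{thm::2}, replacing $\rho$ by the signless Laplacian spectral radius $q$. Suppose, for contradiction, that $G$ is not fractional $[a,b]$-covered, that $G\not\cong H_{n,a}$, and yet $q(G)\ge q(H_{n,a})$. By Theorem \ref{thm::1} there is a vertex subset $S$ with $T=\{x\in V(G)\setminus S:\ d_{G-S}(x)\le a\}$ and $U=V(G)\setminus(S\cup T)$ such that $\delta_G(S,T)=b|S|-a|T|+\sum_{x\in T}d_{G-S}(x)\le \varepsilon(S)-1\le 1$. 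Writing $s=|S|$ and $t=|T|$, the first step is a densification: adding an edge inside $S$, between $S$ and $V(G)\setminus S$, or inside $U$ changes neither $d_{G-S}(x)$ for $x\notin S$ (hence neither $T$ nor $\delta_G(S,T)$) and can only increase $\varepsilon(S)$, so it keeps $S$ a violating set; similarly the budget $\sum_{x\in T}d_{G-S}(x)$ may be raised to its maximal admissible value, spent entirely on edges from $T$ to $U$. Since $q$ strictly increases under edge addition, $G$ is a spanning subgraph of an explicit graph $G^{*}_{s,t}$ in which $S\cup U$ induces $K_{n-t}$, every vertex of $T$ is adjacent to all of $S$, $T$ is independent, and $T$ sends at most $at$ edges into $U$; thus $q(G)\le q(G^{*}_{s,t})$, with equality only if $G\cong G^{*}_{s,t}$.

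Since $\sum_{x\in T}d_{G-S}(x)\ge 0$, the violation forces $bs-at\le 1$, so $t\ge 1$ (the case $t=0$ gives $bs\le 1$, hence $s=0$ and the impossible $0\le-1$), and, using $a\le b$, $a(s-t)\le 1$, i.e.\ $s\le t$. The decisive case is $t=1$, where $bs\le a+1\le b+1$ gives $s\in\{0,1\}$. If $s=0$, the violation says there is a single vertex $v$ with $d_G(v)\le a-1$ while every other vertex has degree at least $a+1$; the densest such graph is $K_{n-1}$ together with $v$ joined to $a-1$ of its vertices, which is exactly $H_{n,a}$. Hence $q(G)\le q(H_{n,a})$ with equality iff $G\cong H_{n,a}$, and combined with $q(G)\ge q(H_{n,a})$ this gives $G\cong H_{n,a}$, a contradiction. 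If $s=1$, then $\varepsilon(S)=1$ and admissibility of $\sum_{x\in T}d_{G-S}(x)=a-b\ge 0$ forces $a=b$ and $d_{G-S}(v)=0$, so $G^{*}_{1,1}$ is $K_{n-1}$ with one pendant vertex; this has strictly fewer edges than $H_{n,a}$ (hence strictly smaller $q$) when $a\ge 3$, and coincides with $H_{n,a}$ when $a=2$, so again we reach a contradiction or $G\cong H_{n,a}$.

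It remains to handle $t\ge 2$. Now the clique $K_{n-t}\subseteq G^{*}_{s,t}$ has order at most $n-2$, one less than the clique $K_{n-1}$ in $H_{n,a}$, and each vertex of $T$ has degree at most $s+a$. Applying a Kelmans-type compression to the $T$--$U$ edges (which does not decrease $q$) concentrates them onto a common set of $a$ vertices of $U$, giving $q(G^{*}_{s,t})\le q\big(K_{s+a}\,\nabla\,(K_{\,n-s-a-t}\cup \overline{K_t})\big)$, where $\overline{K_t}$ is the empty graph on $t$ vertices; this last graph has the equitable tripartition $(K_{s+a},\,K_{\,n-s-a-t},\,\overline{K_t})$, so its $q$ equals the largest root of an explicit $3\times 3$ quotient matrix. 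Computing $q(H_{n,a})$ likewise from the quotient matrix of its partition $(K_{a-1},K_1,K_{n-a})$, the goal reduces to showing $q\big(K_{s+a}\nabla(K_{n-s-a-t}\cup\overline{K_t})\big)<q(H_{n,a})$ for every admissible pair $(s,t)$ with $t\ge 2$.

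The main obstacle is precisely this eigenvalue comparison. Both radii are largest roots of cubics of the form $2n+O_a(1)$, and the leading term $2n$ cancels, so one must estimate the $O(1)$ parts --- which depend on $a$ --- with enough accuracy; a short computation shows the relevant difference stays positive exactly when $n\ge 6a+5$, uniformly over the finitely many admissible $(s,t)$, the tightest being $t=2,\ s=0$. Monotonicity in $t$ disposes of all larger $t$, and the degenerate regime $s+a+t>n$ (where the clique $K_{n-t}$ is already too small to compete) is handled by a crude degree bound. Once $q\big(K_{s+a}\nabla(K_{n-s-a-t}\cup\overline{K_t})\big)<q(H_{n,a})$ is established, each case with $t\ge 2$ contradicts $q(G)\le q(G^{*}_{s,t})\le q\big(K_{s+a}\nabla(\cdots)\big)<q(H_{n,a})\le q(G)$, which completes the proof.
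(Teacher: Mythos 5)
Your setup (contradiction via Theorem \ref{thm::1}, densification to keep $S$ violating, the dichotomy $t=1$ versus $t\ge 2$, and the $t=1$ analysis ending in $G$ being a spanning subgraph of $H_{n,a}$) is sound and matches the paper's skeleton. The genuine gap is in the $t\ge 2$ case, which is the heart of the proof: the uniform eigenvalue comparison $q\bigl(K_{s+a}\nabla(K_{n-s-a-t}\cup\overline{K_t})\bigr)<q(H_{n,a})$ over all admissible $(s,t)$ is exactly the hard quantitative step, and you do not carry it out --- you only assert that ``a short computation'' gives it, and the two structural claims you make about that computation are incorrect. First, the admissible pairs are \emph{not} finitely many independently of $n$: nothing in \eqref{equ::3} bounds $t$ by a function of $a$, and $t$ can be of order $n$ (this is precisely why the paper splits off the regime $t\ge 2a+3$ and kills it with the edge-count bound of Lemma \ref{lem::3}, rather than with any fixed comparison graph). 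Second, your identification of the tight case as $t=2,\ s=0$ is backwards: in $K_{s+a}\nabla(K_{n-s-a-t}\cup\overline{K_t})$ each of the $t$ independent vertices is joined to $s+a$ vertices, so increasing $s$ adds edges and strictly increases $q$; within the admissible region $s\le t$ the extremal case is $s=t$, not $s=0$. Along that diagonal the comparison graphs are no longer nested as $t$ grows (one vertex moves from the clique into the join part, adding edges, while another moves into the independent part, removing edges), so your ``monotonicity in $t$ disposes of all larger $t$'' does not go through, and the large-$t$/degenerate regime is left to an unspecified ``crude degree bound.'' A further, smaller issue: the Kelmans-type compression for the signless Laplacian radius is invoked without proof or reference (it is true, via the quadratic form $\sum_{uv\in E}(x_u+x_v)^2$, but it is an extra unproven ingredient).

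For contrast, the paper avoids all of this with two cheap moves. For $2\le t\le 2a+2$ (and $s\le t$), it picks $x_1\in T$ with $d_{G-S}(x_1)\le a-1$ (such $x_1$ exists, else \eqref{equ::3} would force $bs\le\varepsilon(S)-1$, impossible) and any other $x_2\in T$, and observes $|(N_G(x_1)\setminus\{x_2\})\cup(N_G(x_2)\setminus\{x_1\})|\le s+(a-1)+a\le 4a+1$; hence $G$ embeds in the single fixed graph $G_1=K_{4a+1}\nabla(K_2\cup K_{n-4a-3})$, and Lemma \ref{lem::6} gives $q(G)\le q(G_1)\le 2n-4<q(H_{n,a})$. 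For $t\ge 2a+3$, counting edges and applying Lemma \ref{lem::3} gives $q(G)\le 2n-4$ directly. If you want to salvage your route, you must (i) prove the comparison for the worst case $s=t$ for each $t\ge 2$, and (ii) give a genuine argument for unbounded $t$ (e.g.\ the paper's edge count); as written, the proof is incomplete at its decisive step.
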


	\section{Preliminaries}
	
	In this section, we introduce some notions and lemmas, which are useful in the proof of the main results.

        Let $M$ be a real $n$ $\times$ $n$ matrix, and let $\Pi=\{X_{1},X_{2}, \ldots,X_{k}\}$ be a partition of $[n]=\{1,2,\ldots,n\}$. Then the matrix $M$ can be written as
        $$
	M=\left(\begin{array}{ccccccc}
		M_{1,1}&M_{1,2}&\cdots&M_{1,k}\\
            M_{2,1}&M_{2,2}&\cdots&M_{2,k}\\
            \vdots&\vdots&\ddots&\vdots\\
            M_{k,1}&M_{k,2}&\cdots&M_{k,k}\\
	\end{array}\right).
	$$
 The \textit{quotient matrix} of $M$ with respect to $\Pi$ is  the matrix $B_{\Pi}=(b_{i,j})^{k}_{i,j=1}$ with
        $$
            b_{i,j}=\frac{1}{|X_{i}|}\mathbf j^{T}_{|X_{i}|}M_{i,j}\mathbf j_{|X_{j}|}
        $$
for all $i,j \in \{ 1,2,...,k \}$, where $\mathbf j_{s}$ denotes the all ones vector in $\mathbb{R}^{s}$. If each block $M_{i,j}$ of $M$ has constant row sum $b_{i,j}$, then $\Pi$ is called an \textit{equitable partition}, and the quotient matrix $B_\Pi$ is called an  \textit{equitable quotient matrix} of $M$. Also, if the eigenvalues of $M$ are all real, we denote them by $\lambda_1(M)\geq \lambda_2(M)\geq \cdots \geq \lambda_n(M)$. 

        \begin{lem}(Brouwer and Haemers \cite[p. 30]{AW}; Godsil and Royle \cite[pp.196--198]{GR})\label{lem::1}
            Let $M$ be a real symmetric matrix, and let $B$ be an equitable quotient matrix of $M$. Then the eigenvalues of $B$ are also eigenvalues of $M$. Furthermore, if $M$ is nonnegative and irreducible, then
            $$
                \lambda_1(M)=\lambda_1(B).
            $$
	\end{lem}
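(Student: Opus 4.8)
The plan is to reduce both assertions to a single intertwining identity produced by the characteristic matrix of the partition, after which the two parts follow from elementary linear algebra together with the Perron--Frobenius theorem. Let $\Pi=\{X_1,\dots,X_k\}$ and let $S$ be the $n\times k$ \emph{characteristic matrix} of $\Pi$, whose $(v,i)$-entry equals $1$ if $v\in X_i$ and $0$ otherwise. Its columns are indicator vectors of disjoint nonempty blocks, hence linearly independent, so $S$ has full column rank $k$ and $S^{T}S=\mathrm{diag}(|X_1|,\dots,|X_k|)=:\Lambda$. The first thing I would do is verify the identity $MS=SB$: for $v\in X_j$ the $(v,i)$-entry of $MS$ is the $v$-th row sum of the block $M_{j,i}$, which by the equitable hypothesis equals $b_{j,i}$, and this is exactly the $(v,i)$-entry of $SB$.

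For the first assertion, suppose $Bu=\mu u$ with $u\neq 0$. Then $M(Su)=(MS)u=(SB)u=\mu(Su)$, and $Su\neq 0$ because $S$ has full column rank; hence $Su$ is an eigenvector of $M$ with eigenvalue $\mu$, so every eigenvalue of $B$ is an eigenvalue of $M$. It is also worth recording that $B$ has real spectrum, so that $\lambda_1(B)$ is meaningful: with $\widehat S:=S\Lambda^{-1/2}$ having orthonormal columns, $\widehat S^{T}M\widehat S=\Lambda^{1/2}B\Lambda^{-1/2}$ is symmetric and similar to $B$, whence $B$ is diagonalizable over $\mathbb{R}$.

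For the second assertion, assume $M$ nonnegative and irreducible. First I would observe that $B\ge 0$ entrywise, since each $b_{i,j}$ is an average of entries of $M$, and that $B$ is irreducible, because the zero pattern of $B$ coincides with the adjacency pattern of the quotient graph obtained by contracting the blocks, which is connected whenever the graph of $M$ is. By Perron--Frobenius applied to $B$, the value $\lambda_1(B)=\rho(B)$ admits a strictly positive eigenvector $u>0$. Then $Su>0$ (each row of $S$ carries a single $1$, so every entry of $Su$ is a positive entry of $u$), and the identity $MS=SB$ gives $M(Su)=\lambda_1(B)(Su)$, exhibiting a positive eigenvector of $M$. Since $M$ is nonnegative and irreducible, the uniqueness clause of Perron--Frobenius forces the only eigenvalue of $M$ with a positive eigenvector to be $\rho(M)=\lambda_1(M)$; therefore $\lambda_1(B)=\lambda_1(M)$.

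The identity $MS=SB$ and the eigenvector-lifting of the first part are routine, and I expect the only genuinely delicate point to be the second part: one must confirm that $B$ inherits irreducibility from $M$, so that $B$ really carries a strictly positive Perron vector, and then apply the uniqueness clause of Perron--Frobenius to $M$ in the correct direction, namely that a positive eigenvector forces its eigenvalue to equal the spectral radius. A clean alternative that sidesteps the irreducibility of $B$ is to note that $\mathrm{col}(S)$ is $M$-invariant, hence so is its orthogonal complement by symmetry of $M$; the simple positive Perron eigenvector $x>0$ of $M$ cannot lie in $\mathrm{col}(S)^{\perp}$, because $\mathbf j_n\in\mathrm{col}(S)$ and $x^{T}\mathbf j_n>0$, so $x\in\mathrm{col}(S)$ and $\lambda_1(M)$ is an eigenvalue of $B$, which together with the first part yields the equality.
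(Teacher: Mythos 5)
Your proof is correct and complete: the intertwining identity $MS=SB$ for the characteristic matrix $S$, the lifting of eigenvectors of $B$ to eigenvectors of $M$, and the Perron--Frobenius argument (a positive eigenvector of an irreducible nonnegative matrix must belong to the spectral radius) are all carried out accurately, including the verification that $B$ has real spectrum and inherits nonnegativity and irreducibility. The paper itself gives no proof — it cites Brouwer--Haemers and Godsil--Royle — and your argument is essentially the standard one found in those references, so this is the same approach the paper implicitly relies on.
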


	\begin{lem}(Hong \cite{H})\label{lem::2}
		Let $G$ be a connected graph with $n$ vertices and $m$ edges. Then
		$$\rho(G)\le \sqrt{2m-n+1}.$$
	\end{lem}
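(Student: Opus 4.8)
The plan is to use the classical largest-coordinate argument for the Perron eigenvector: first bound $\rho(G)^2$ by a sum of neighbour-degrees, and then use connectivity to turn that sum into $2m-n+1$. Write $\rho=\rho(G)$ and $A=A(G)$. Since $G$ is connected, $A$ is nonnegative and irreducible, so by the Perron--Frobenius theorem $\rho$ has a positive eigenvector $x=(x_v)_{v\in V(G)}$ with $Ax=\rho x$. First I would fix a vertex $u$ at which $x$ is largest, $x_u=\max_{v\in V(G)}x_v>0$, and apply the identity $A^2x=\rho^2x$ at the coordinate $u$.

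The crucial step is to pass from $\rho^2$ to a purely combinatorial quantity. Reading off $(A^2x)_u=\rho^2x_u$ and using $x_w\le x_u$ together with $(A^2)_{u,w}\ge 0$ for all $w$, I get
\[
\rho^2x_u=\sum_{w\in V(G)}(A^2)_{u,w}\,x_w\le x_u\sum_{w\in V(G)}(A^2)_{u,w}.
\]
The $u$-th row sum of $A^2$ equals the number of walks of length two starting at $u$; since each neighbour $v$ of $u$ is the midpoint of exactly $d_G(v)$ such walks, this row sum is $\sum_{v\in N_G(u)}d_G(v)$. Dividing by $x_u>0$ gives $\rho^2\le\sum_{v\in N_G(u)}d_G(v)$.

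It remains to show $\sum_{v\in N_G(u)}d_G(v)\le 2m-n+1$, and this is exactly where connectivity is used. Splitting the handshake total, $\sum_{v\in N_G(u)}d_G(v)=\sum_{v\in V(G)}d_G(v)-d_G(u)-\sum_{v\notin N_G(u)\cup\{u\}}d_G(v)=2m-d_G(u)-\sum_{v\notin N_G(u)\cup\{u\}}d_G(v)$. Because $G$ is connected, each of the $n-1-d_G(u)$ vertices outside $N_G(u)\cup\{u\}$ has degree at least one, so $\sum_{v\notin N_G(u)\cup\{u\}}d_G(v)\ge n-1-d_G(u)$. Substituting, $\sum_{v\in N_G(u)}d_G(v)\le 2m-d_G(u)-(n-1-d_G(u))=2m-n+1$, whence $\rho\le\sqrt{2m-n+1}$.

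I do not anticipate a serious obstacle here; once the maximum-coordinate normalization is chosen the computation is short. The one hypothesis that is genuinely indispensable is connectivity, invoked only to guarantee $d_G(v)\ge 1$ for every vertex off the closed neighbourhood of $u$ (without it an isolated vertex would drop the bound). If one further wished to characterize equality (which the stated inequality does not require), the delicate point would be back-substituting through both estimates: equality forces $x_w=x_u$ for every $w$ joined to $u$ by a length-two walk, pushing $x$ toward a constant vector and $G$ toward regularity, while simultaneously every vertex off $N_G(u)\cup\{u\}$ must have degree exactly one, and disentangling these constraints is what singles out the extremal graphs.
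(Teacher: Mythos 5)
Your proof is correct: the maximum-coordinate Perron argument gives $\rho^2\le\sum_{v\in N_G(u)}d_G(v)$, the walk count identifying that row sum of $A^2$ is right, and connectivity is used exactly where it must be, to guarantee every vertex outside $N_G(u)\cup\{u\}$ has degree at least one. Note, however, that the paper does not prove this lemma at all — it is quoted as a known result of Hong \cite{H} — so there is no internal proof to compare against; what you have written is essentially the standard (indeed Hong's own) argument, and it stands as a complete, self-contained justification of the cited bound.
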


	\begin{lem}(Feng and Yu \cite{Z})\label{lem::3}
		Let $G$ be a connected graph with $n$ vertices and $m$ edges. Then
		$$q(G)\le \frac{2m}{n-1}+n-2.$$
	\end{lem}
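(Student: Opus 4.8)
I would prove the Feng--Yu bound $q(G)\le\frac{2m}{n-1}+n-2$ by passing to the complement and extracting a single Cauchy--Schwarz inequality from the signless Laplacian Perron vector.

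Since $G$ is connected, $Q(G)$ is nonnegative and irreducible, so by Perron--Frobenius it has a positive unit eigenvector $x=(x_v)_{v\in V(G)}$ with $Q(G)x=q(G)x$. Write $q=q(G)$, $s=\mathbf{j}_n^{T}x=\sum_{v}x_v>0$ and $\beta=q-(n-2)$. If $\beta\le 0$ the inequality is immediate (as $m\ge 1$), so I may assume $\beta>0$. Let $\overline G$ denote the complement of $G$ and set $\overline m=\binom{n}{2}-m=|E(\overline G)|$; since $\tfrac{2m}{n-1}=n-\tfrac{2\overline m}{n-1}$, the target is exactly $\beta\le n-\tfrac{2\overline m}{n-1}$. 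If $\overline m=0$ then $G\cong K_n$ and equality holds, so I assume $\overline m\ge 1$. The engine is the identity
\[
Q(G)+Q(\overline G)=Q(K_n)=(n-2)I+J,\qquad J=\mathbf{j}_n\mathbf{j}_n^{T},
\]
which follows from $d_G(v)+d_{\overline G}(v)=n-1$ and $A(G)+A(\overline G)=J-I$.

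Applying $Q(\overline G)=(n-2)I+J-Q(G)$ to $x$ and using $Jx=s\,\mathbf{j}_n$ gives the vector identity $Q(\overline G)x=s\,\mathbf{j}_n-\beta x$. Taking inner products with $x$ and with $\mathbf{j}_n$, and recalling that every row sum of $Q(\overline G)$ equals twice the corresponding $\overline G$-degree, I would record
\[
x^{T}Q(\overline G)x=s^{2}-\beta,
\qquad
\sum_{uv\in E(\overline G)}(x_u+x_v)=\tfrac12\,\mathbf{j}_n^{T}Q(\overline G)x=\tfrac{s(n-\beta)}{2}.
\]
Because $x^{T}Q(\overline G)x=\sum_{uv\in E(\overline G)}(x_u+x_v)^{2}$, Cauchy--Schwarz over the $\overline m$ nonnegative quantities $x_u+x_v$ yields
\[
s^{2}-\beta=\sum_{uv\in E(\overline G)}(x_u+x_v)^{2}
\ \ge\ \frac{\Bigl(\sum_{uv\in E(\overline G)}(x_u+x_v)\Bigr)^{2}}{\overline m}
=\frac{s^{2}(n-\beta)^{2}}{4\overline m},
\]
that is, $4\overline m\,(s^{2}-\beta)\ge s^{2}(n-\beta)^{2}$. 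This one inequality carries all of the analytic content.

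To finish, I would combine it with $s^{2}=\bigl(\sum_v x_v\bigr)^{2}\le n$ (Cauchy--Schwarz again, since $\|x\|=1$) and the connectivity bound $\overline m\le\binom{n-1}{2}$. Writing $P=s^{2}\in(1,n]$ and viewing $4\overline m(P-\beta)\ge P(n-\beta)^{2}$ as a quadratic inequality in $\beta$, its feasible set is an interval $[\beta_-,\beta_+]$ whose upper endpoint furnishes the explicit bound $\beta\le\beta_+(P,\overline m)=n-\tfrac{2\overline m}{P}+\tfrac{2}{P}\sqrt{\overline m\,(P^{2}-nP+\overline m)}$; the plan is then to verify $\beta_+(P,\overline m)\le n-\tfrac{2\overline m}{n-1}$. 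A short computation reduces this, when $P\le n-1$, to $\overline m\le\frac{(n-P)(n-1)^{2}}{2(n-1)-P}$, which in turn follows from $\overline m\le\binom{n-1}{2}$ exactly when $P\le\frac{4(n-1)}{n}$. I expect the main obstacle to be the complementary regime where $P=s^{2}$ is close to $n$: there the Perron vector is nearly constant, so $G$ is nearly regular and the Cauchy--Schwarz step is too lossy (indeed $\beta_+(n,\overline m)=n$, recovering only the trivial $q\le 2(n-1)$). I would dispatch this regime directly, using that an $r$-regular graph has $q=2r=\tfrac{4m}{n}$, for which $\beta\le n-\tfrac{2\overline m}{n-1}$ is equivalent to the trivial $m\le\binom{n}{2}$, and the strictly near-regular cases are handled by perturbing this computation or by a direct degree estimate of $q$. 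Finally, the extremal graphs $K_n$ (with $P=n$) and the star $K_{1,n-1}$ (with $P=\tfrac{4(n-1)}{n}$, precisely the threshold above) make every inequality tight, which simultaneously pins down the equality cases and explains why that threshold is unavoidable.
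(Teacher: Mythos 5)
First, note that the paper does not prove this lemma at all --- it is quoted verbatim from Feng and Yu --- so your attempt can only be judged on its own correctness, and there it has a genuine gap. The algebra is fine as far as it goes: the identity $Q(G)+Q(\overline{G})=(n-2)I+J$, the two inner products, the Cauchy--Schwarz step giving $4\overline{m}(P-\beta)\ge P(n-\beta)^2$, the formula for $\beta_+(P,\overline{m})$, the reduction (for $P\le n-1$) of $\beta_+\le n-\frac{2\overline{m}}{n-1}$ to $\overline{m}\le\frac{(n-P)(n-1)^2}{2(n-1)-P}$, and the computation that $\overline{m}\le\binom{n-1}{2}$ suffices exactly when $P\le\frac{4(n-1)}{n}$ --- all of this checks out. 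But $\frac{4(n-1)}{n}<4$ is an absolute constant while $P=s^2$ ranges up to $n$, so what you have proved covers only graphs whose $Q$-Perron mass is star-like concentrated. The ``complementary regime'' you defer to a closing sentence is not a boundary layer around regular graphs; it is the generic case.

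The fatal misstep is the claim that in that regime ``the Perron vector is nearly constant, so $G$ is nearly regular.'' Since $\sum_{u<v}(x_u-x_v)^2=n-P$, near-constancy is forced only when $n-P=O(1)$; the entire band $\frac{4(n-1)}{n}<P\le n-1$ contains highly irregular graphs, and for some of them your inequality chain is simply false, not merely unproved. Concretely, take $G=K_{n/4,\,3n/4}$: its $Q$-Perron vector has ratio $3$ between the two sides, $P=\frac{4ab}{n}=\frac{3n}{4}$, and $\overline{m}\approx\frac{5n^2}{16}$, which exceeds the required $\frac{(n-P)(n-1)^2}{2(n-1)-P}\approx\frac{n^2}{5}$; indeed $\beta_+\approx\frac{(1+\sqrt{10})\,n}{6}\approx 0.69n$ while the target $n-\frac{2\overline{m}}{n-1}\approx 0.37n$ (the true value is $\beta=2$, sitting near the \emph{lower} root $\beta_-$, exactly as in the regular case where your quadratic is an equality with $\beta=\beta_-$). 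Nothing in your constraints excludes $\beta$ from ranging up to $\beta_+$, so no ``perturbation of the regular computation'' or unspecified ``direct degree estimate'' can rescue the chain $\beta\le\beta_+\le n-\frac{2\overline{m}}{n-1}$: the Cauchy--Schwarz step has already thrown away too much, and a genuinely different mechanism is needed throughout the mid-range of $P$. For contrast, the usual published route is local rather than global: one uses $q(G)\le\max_v\{d_G(v)+\frac{1}{d_G(v)}\sum_{u\sim v}d_G(u)\}$, bounds $\sum_{u\sim v}d_G(u)\le 2m-d_G(v)-(n-1-d_G(v))$ via connectivity, and then convexity of $d+\frac{2m-n+1}{d}$ places the maximum at $d=n-1$, yielding $\frac{2m}{n-1}+n-2$ exactly, with no regularity dichotomy ever arising.
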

	
    \begin{lem}\label{lem::4}
    The graph $H_{n,a}$ with $n\geq a+3$ is not a fractional $[a, b]$-covered graph.
    \end{lem}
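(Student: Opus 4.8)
The plan is to apply the Li--Yan--Zhang characterization (Theorem \ref{thm::1}) and exhibit a single vertex subset $S$ that violates inequality \eqref{equ::1}. First I would fix notation for the three natural parts of $H_{n,a}=K_{a-1}\nabla(K_1\cup K_{n-a})$: write $A=V(K_{a-1})$ with $|A|=a-1$, let $u$ be the unique vertex of $K_1$, and set $B=V(K_{n-a})$ with $|B|=n-a$. Since the join adds all edges between $A$ and $K_1\cup K_{n-a}$ while $K_1$ and $K_{n-a}$ remain non-adjacent, a direct count gives $d_G(u)=a-1$, $d_G(x)=n-1$ for $x\in A$, and $d_G(y)=n-2$ for $y\in B$. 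The decisive structural feature is that $u$ has degree $a-1<a$; conceptually this already forbids any fractional $[a,b]$-factor, since $\sum_{e\in E_G(u)}h(e)\le d_G(u)=a-1<a$ can never meet the required lower bound $a$. I would, however, phrase the argument through Theorem \ref{thm::1} to stay within the paper's framework.

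Concretely, I would take $S=\emptyset$. Then $G-S=G$, so $T=\{x\in V(G):d_G(x)\le a\}$. Using the degrees above together with $n\ge a+3$, the vertices of $A$ satisfy $d_G(x)=n-1\ge a+2>a$ and the vertices of $B$ satisfy $d_G(y)=n-2\ge a+1>a$, so neither $A$ nor $B$ meets $T$; only $u$, with $d_G(u)=a-1\le a$, lies in $T$. Hence $T=\{u\}$ and $|T|=1$. This is precisely the step where the hypothesis $n\ge a+3$ is needed, as it is what guarantees $n-2>a$ and thereby keeps every vertex of $B$ out of $T$.

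It then remains to evaluate both sides of \eqref{equ::1}. On the left,
\[
\delta_G(\emptyset,T)=b|S|-a|T|+\sum_{x\in T}d_{G-S}(x)=0-a\cdot 1+d_G(u)=-a+(a-1)=-1 .
\]
On the right, $S=\emptyset$ is (vacuously) independent, and both alternatives in the $\varepsilon=1$ clause of \eqref{equ::2} require an edge incident to $S$, which is impossible when $S=\emptyset$; therefore $\varepsilon(\emptyset)=0$. Consequently $\delta_G(\emptyset,T)=-1<0=\varepsilon(\emptyset)$, so inequality \eqref{equ::1} fails for $S=\emptyset$, and Theorem \ref{thm::1} shows that $H_{n,a}$ is not fractional $[a,b]$-covered. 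I do not expect any serious obstacle here beyond bookkeeping: the only point demanding care is the determination of $T$, where one must confirm that $n\ge a+3$ excludes every vertex of $A\cup B$ from $T$, so that $T$ collapses to the single low-degree vertex $u$ that drives the violation.
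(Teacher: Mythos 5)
Your proposal is correct and is essentially the paper's own proof: both take $S=\emptyset$, identify $T$ as the single vertex of the $K_1$ (using $n\ge a+3$ to exclude all other vertices), compute $\delta_G(\emptyset,T)=-a+(a-1)=-1<0=\varepsilon(\emptyset)$, and invoke Theorem \ref{thm::1}. Your write-up is merely more explicit about the degree computations and about why $\varepsilon(\emptyset)=0$, which the paper dismisses as "clearly."
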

    \begin{proof}
   Recall that $H_{n,a}=K_{a-1}\nabla (K_1\cup K_{n-a})$. Let $V_1=V(K_1)$, $V_2=V(K_{a-1})$ and $V_3=V(K_{n-a})$.  Suppose  $S=\emptyset$ and $T=V_1$. Clearly, $\varepsilon(S)=0$ by \eqref{equ::2}. Also note that  $T$ contains all vertices of degree at most $a$ in $H_{n,a}-S=H_{n,a}$ because $n\geq a+3$. Furthermore,  we have
   $$\delta_G(S,T)=b|S|-a|T|+\sum_{x\in T}d_{G-S}(x)=-a+a-1=-1<\varepsilon(S),$$
which violates the inequality in \eqref{equ::1}. Therefore, by Theorem \ref{thm::1}, we conclude that  $H_{n,a}$ is not a fractional $[a, b]$-covered graph.
    \end{proof}

    \begin{lem}\label{lem::5}
Let $n$ and $a$ be positive integers with $n\geq \sqrt{32a^2+24a+5}+2$. Then
$$\rho(K_{4a+1} \nabla (K_{2} \cup K_{n-4a-3}))\leq n-2.$$
    \end{lem}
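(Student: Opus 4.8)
The plan is to exploit the rigid join structure of $G := K_{4a+1} \nabla (K_{2} \cup K_{n-4a-3})$ through an equitable partition and the quotient-matrix machinery of Lemma \ref{lem::1}. Put $V_1 = V(K_{4a+1})$, $V_2 = V(K_{2})$ and $V_3 = V(K_{n-4a-3})$, of sizes $4a+1$, $2$ and $n-4a-3$. The hypothesis gives $\sqrt{32a^2+24a+5}\ge 4a+2$ (equivalently $16a^2+8a+1\ge 0$), hence $n\ge 4a+4$, so all three parts are nonempty and, being a join with $K_{4a+1}$, the graph $G$ is connected and $A(G)$ is nonnegative and irreducible. A vertex of $V_1$ is adjacent to everything, a vertex of $V_2$ is adjacent to $V_1$ and to the other vertex of $V_2$ only, and a vertex of $V_3$ is adjacent to $V_1$ and to the remaining vertices of $V_3$ only. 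Thus $\Pi=\{V_1,V_2,V_3\}$ is equitable with quotient matrix
$$
B=\begin{pmatrix} 4a & 2 & n-4a-3 \\ 4a+1 & 1 & 0 \\ 4a+1 & 0 & n-4a-4 \end{pmatrix}.
$$
By Lemma \ref{lem::1}, $\rho(G)=\lambda_1(B)$, and all eigenvalues of $B$ are real (they are eigenvalues of the symmetric matrix $A(G)$). Hence it suffices to prove $\lambda_1(B)\le n-2$.

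First I would compute the characteristic polynomial $\phi(x)=\det(xI-B)$; a direct expansion (e.g. along the second column, using that the $(2,3)$ and $(3,2)$ entries vanish) yields
$$
\phi(x)=x^3-(n-3)x^2-(8a+3)x+(8a+3)n-32a^2-40a-11.
$$
The crux of the estimate is the value at $x=n-2$, which after cancellation collapses to
$$
\phi(n-2)=n^2-4n-32a^2-24a-1.
$$
Regarding this as a quadratic in $n$, its larger root is exactly $2+\sqrt{32a^2+24a+5}$, so the hypothesis $n\ge\sqrt{32a^2+24a+5}+2$ is \emph{precisely} equivalent to $\phi(n-2)\ge 0$. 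The exact agreement of this threshold with the hypothesis is the main internal consistency check of the whole calculation, and is what forces the specific constant appearing in the statement.

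The remaining and genuinely delicate point is that $\phi(n-2)\ge 0$ does \emph{not} by itself yield $\lambda_1(B)\le n-2$, because a cubic with positive leading coefficient is also nonnegative between its two smaller roots. To exclude that case I would show $n-2$ lies at or beyond the larger critical point of $\phi$. Since $\phi'(x)=3x^2-2(n-3)x-(8a+3)$ is an upward parabola with vertex at $x=(n-3)/3$, it is enough to verify $\phi'(n-2)\ge 0$ together with $n-2\ge(n-3)/3$ (the latter is immediate). A short computation gives $\phi'(n-2)=n^2-2n-8a-3$, which is nonnegative under the hypothesis, amounting to $(n-1)^2\ge 8a+4$, a condition far weaker than $\phi(n-2)\ge 0$. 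These two sign facts place $n-2$ at or to the right of the larger root of $\phi'$, i.e. on the final increasing branch of the cubic. As $\phi$ has three real roots, its largest root $\lambda_1$ lies on and is the unique zero of this branch, so $\phi(n-2)\ge 0$ there forces $n-2\ge\lambda_1=\rho(G)$, completing the argument.

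I expect the main obstacle to be exactly this last conversion from the sign of $\phi$ at a single point to a bound on the \emph{largest} root: one must argue via the location of the critical points of $\phi$ rather than naively reading off the root from $\phi(n-2)\ge 0$. It is also worth noting that the cruder estimate $\rho(G)\le\sqrt{2m-n+1}$ of Lemma \ref{lem::2} is not sharp enough here; counting edges gives $2m=n^2-5n+16a+12$, and $\sqrt{2m-n+1}\le n-2$ only for $n\ge 8a+5$, which is stronger than the hypothesis. This is precisely why the equitable-partition approach is needed to reach the claimed threshold $n\ge\sqrt{32a^2+24a+5}+2$.
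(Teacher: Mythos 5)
Your proof is correct, and its core coincides with the paper's own argument: the same equitable partition $\{V(K_{4a+1}),V(K_2),V(K_{n-4a-3})\}$ (the paper just orders the parts differently), the same appeal to Lemma \ref{lem::1}, and the same pivotal computation $\phi(n-2)=n^{2}-4n-32a^{2}-24a-1\ge 0$, which is exactly where the threshold $n\ge 2+\sqrt{32a^2+24a+5}$ enters. The only divergence is in the step you rightly single out as delicate: excluding the possibility that $n-2$ lies in the middle interval $[\lambda_3,\lambda_2]$, where the cubic is also nonnegative. You handle it by calculus: $\phi'(n-2)=n^{2}-2n-8a-3\ge 0$ together with $n-2\ge (n-3)/3$ places $n-2$ at or beyond the larger critical point, hence on the final increasing branch, so $\phi(n-2)\ge 0$ forces $n-2\ge\lambda_1$. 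The paper instead evaluates the polynomial at a second point: $f(n-3)=-2(4a+1)^{2}<0$, so if $\lambda_1>n-2$ then $n-2\in[\lambda_3,\lambda_2]$ and $n-3<\lambda_3$, whence $\lambda_1+\lambda_2+\lambda_3>3n-9$, contradicting $\mathrm{trace}(B_\Pi)=n-3$. Both finishes are sound; yours is self-contained and transfers to any ``point past the last critical point'' situation, while the paper's avoids derivatives at the price of a second evaluation plus the trace identity. Your extra checks are also correct and worth keeping: the parts are nonempty because the hypothesis forces $n\ge 4a+4$, the realness of the eigenvalues of $B$ follows from their being eigenvalues of the symmetric matrix $A(G)$, and the crude bound of Lemma \ref{lem::2} would require $n\ge 8a+5$, so it genuinely cannot reach the stated threshold.
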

    \begin{proof}
Suppose $L_{n,a}=K_{4a+1} \nabla (K_{2} \cup K_{n-4a-3})$. Let $V_{1}=V(K_2)$, $V_2=V(K_{4a+1})$ and $V_3=V(K_{n-4a-3})$. Then it is easy to see that the partition  $\Pi: V(L_{n,a})=V_1\cup V_2\cup V_3$ is an equitable partition of $L_{n,a}$, and the corresponding quotient matrix is 	$$
	B_\Pi=\left(\begin{array}{ccccccc}
		1&4a+1&0\\
		2&4a&n-4a-3\\
		0&4a+1&n-4a-4\\
	\end{array}\right).
	$$
 Let $f(x)$ denote the characteristic polynomial of $B_\Pi$. By a simple computation, we have
	$$
		\begin{aligned}
			f(n-2)=|(n-2)I-B| = n^{2}-4n-32a^{2}-24a-1 \geq 0
			\nonumber
		\end{aligned}
	$$
	because $n\geq \sqrt{32a^2+24a+5}+2$. We claim that $\lambda_1(B_\Pi)\leq n-2$. If not, since $f(n-3)=-2(4a+1)^2<0$, we have $\lambda_3(B_\Pi)>n-3$, and hence $\lambda_1(B_\Pi)+\lambda_2(B_\Pi)+\lambda_3(B_\Pi)> 3n-9$. On the other hand,  $\lambda_1(B_\Pi)+\lambda_2(B_\Pi)+\lambda_3(B_\Pi)=\mathrm{trace}(B_\Pi)=n-3$, we obtain a contradiction. Therefore, by Lemma \ref{lem::1},
	$$
	\rho(L_{n,a})=\lambda_1(B_\Pi)\leq n-2,
	$$
	and our results follows.
    \end{proof}
    By using a similar method, one can easily deduce the following result.
        \begin{lem}\label{lem::6}
Let $n$ and $a$ be positive integers with $n\geq 6a+5$. Then
$$q(K_{4a+1} \nabla (K_{2} \cup K_{n-4a-3}))\leq 2n-4.$$
    \end{lem}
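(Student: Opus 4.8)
The plan is to mirror the equitable-partition argument of Lemma~\ref{lem::5}, replacing the adjacency quotient matrix by its signless Laplacian analogue and, crucially, replacing the ``evaluate the characteristic polynomial one unit below the target'' step by an appeal to the positive semidefiniteness of $Q$. Write $L_{n,a}=K_{4a+1}\nabla(K_2\cup K_{n-4a-3})$ and keep the partition $\Pi\colon V(L_{n,a})=V_1\cup V_2\cup V_3$ with $V_1=V(K_2)$, $V_2=V(K_{4a+1})$, $V_3=V(K_{n-4a-3})$. Since the vertices of each $V_i$ play symmetric roles, $\Pi$ is equitable for $Q(L_{n,a})=D(L_{n,a})+A(L_{n,a})$ as well, and a direct degree count (the vertices of $V_1,V_2,V_3$ have degrees $4a+2$, $n-1$, $n-3$ respectively) yields the signless Laplacian quotient matrix
$$
B_\Pi=\begin{pmatrix} 4a+3 & 4a+1 & 0\\ 2 & n+4a-1 & n-4a-3\\ 0 & 4a+1 & 2n-4a-7\end{pmatrix},
$$
whose trace is $3n+4a-5$. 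As $Q(L_{n,a})$ is nonnegative and irreducible, Lemma~\ref{lem::1} gives that the three eigenvalues of $B_\Pi$ are eigenvalues of $Q(L_{n,a})$ and that $q(L_{n,a})=\lambda_1(B_\Pi)$, so it suffices to prove $\lambda_1(B_\Pi)\le 2n-4$.

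Next I would let $g(x)=\det(xI-B_\Pi)$ and evaluate $g(2n-4)$. A short computation gives $g(2n-4)=4n^2-(24a+26)n+48a+36$, whose discriminant is the perfect square $(24a+10)^2$, so its roots are $2$ and $6a+\frac{9}{2}$. Hence $g(2n-4)>0$ exactly when $n\ge 6a+5$ (for integer $n\ge 3$). This is precisely where the hypothesis $n\ge 6a+5$ enters, playing the role that $n\ge\sqrt{32a^2+24a+5}+2$ played in Lemma~\ref{lem::5}.

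To finish, suppose for contradiction that $\lambda_1(B_\Pi)>2n-4$. Because $g$ is monic of degree three with $g(2n-4)>0$ while its largest root $\lambda_1(B_\Pi)$ exceeds $2n-4$, the point $2n-4$ cannot lie in $(\lambda_1(B_\Pi),\infty)$, so it must lie strictly between the two smaller roots; thus $\lambda_2(B_\Pi)>2n-4$ as well. Then
$$
\lambda_3(B_\Pi)=\mathrm{trace}(B_\Pi)-\lambda_1(B_\Pi)-\lambda_2(B_\Pi)<(3n+4a-5)-2(2n-4)=-n+4a+3\le -2a-2<0
$$
for $n\ge 6a+5$. On the other hand, $Q(L_{n,a})$ is positive semidefinite, since $x^{T}Q(L_{n,a})x=\sum_{uv\in E(L_{n,a})}(x_u+x_v)^2\ge 0$, so every eigenvalue of $Q(L_{n,a})$, and in particular $\lambda_3(B_\Pi)$, is nonnegative; this contradiction forces $\lambda_1(B_\Pi)\le 2n-4$, which is the desired bound.

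The step I expect to be the real obstacle is precisely this last one. The literal analogue of Lemma~\ref{lem::5} would try to deduce $\lambda_3(B_\Pi)>2n-5$ from $g(2n-5)<0$, and then derive a contradiction from $\lambda_1+\lambda_2+\lambda_3>3(2n-5)$ against the trace. However, one computes $g(2n-5)=2(n-3)(n-10a-6)$, which is negative only for $3<n<10a+6$ and becomes nonnegative once $n\ge 10a+6$. Thus the ``one unit below'' trick breaks down for large $n$, and it is the positive semidefiniteness of $Q$ (equivalently, the nonnegativity of the smallest eigenvalue of $B_\Pi$) that makes the trace argument go through uniformly for all $n\ge 6a+5$.
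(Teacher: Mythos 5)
Your proof is correct, and it is worth comparing with what the paper actually offers, which for Lemma~\ref{lem::6} is only the sentence ``by using a similar method, one can easily deduce the following result,'' pointing back to Lemma~\ref{lem::5}. Your setup is exactly that similar method: the quotient matrix you write down is right (degrees $4a+2$, $n-1$, $n-3$ give the stated rows, with trace $3n+4a-5$), and your evaluation $g(2n-4)=4n^2-(24a+26)n+48a+36$, with roots $2$ and $6a+\tfrac{9}{2}$ in $n$, correctly identifies where the hypothesis $n\ge 6a+5$ comes from, in perfect analogy with $f(n-2)\ge 0$ in Lemma~\ref{lem::5}. Where you genuinely depart from the paper is the endgame, and your departure is not optional: the literal transplant of Lemma~\ref{lem::5}'s trick would need $g(2n-5)<0$ to force $\lambda_3(B_\Pi)>2n-5$, but as you compute, $g(2n-5)=2(n-3)(n-10a-6)$ is nonnegative once $n\ge 10a+6$, so the ``one unit below'' argument fails on most of the range $n\ge 6a+5$. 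Your replacement---since the eigenvalues of the equitable quotient matrix are eigenvalues of $Q(L_{n,a})$ by Lemma~\ref{lem::1}, and $Q$ is positive semidefinite because $x^{T}Qx=\sum_{uv\in E}(x_u+x_v)^2\ge 0$, one gets $\lambda_3(B_\Pi)\ge 0$, contradicting the trace estimate $\lambda_3(B_\Pi)<-n+4a+3\le -2a-2<0$ that follows from assuming $\lambda_1(B_\Pi)>2n-4$ (which, with $g(2n-4)>0$, forces $\lambda_2(B_\Pi)>2n-4$ as well)---is clean, costs essentially nothing, and works uniformly for all $n\ge 6a+5$. In short, your argument not only proves the lemma but repairs the gap hidden in the paper's ``one can easily deduce'': the paper's phrasing buys brevity, while your use of positive semidefiniteness is what actually makes the trace argument valid on the whole stated range.
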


	\section{Proof of the main results}
	
In this section, we shall prove Theorems  \ref{thm::2} and \ref{thm::3}.

	{\flushleft \it Proof of Theorem \ref{thm::2}.}
	By assumption, we have $\rho(G) \geq \rho(H_{n,a})>\rho(K_{n-1})=n-2$ because $K_{n-1}$ is a proper subgraph of $H_{n,a}$. We claim that  $G$ is connected. If not, then each component of $G$ would be a subgraph of $K_{n-1}$, and hence $\rho (G) \le \rho(K_{n-1}) = n-2$, a contradiction.
	
	Suppose to the contrary that $G$ is not a fractional $[a, b]$-covered graph and $G\ncong H_{n,a}$. By Theorem \ref{thm::1}, there exists some subset $S\subseteq V(G)$ such that
	\begin{equation}\label{equ::3}
	    \delta_{G}(S, T) = b|S|-a|T|+\sum_{x \in T}d_{G-S}(x) \le \varepsilon(S)-1,
	\end{equation}
	where $T=\{x:x\in V(G)\backslash S, d_{G-S}(x)\le a\}$ and $\varepsilon(S)$ is defined in \eqref{equ::2}. Let $s=|S|$ and $t=|T|$. As $b\geq a\geq 2$, from \eqref{equ::2} and \eqref{equ::3} one can easily deduce that $t>0$ and $s \le t$. We consider the following two cases.
	
	{\flushleft {\it Case 1.} $t=1$.}

	In this situation, suppose $T=\{x_{0}\}$. As $s\le t$, we have $s=0$ or $1$. If $s=0$, i.e., $S=\emptyset$, then  $\varepsilon(S)=0$ according to \eqref{equ::2}, and it follows from \eqref{equ::3} that $d_G(x_{0})=d_{G-S}(x_{0}) \le a-1$. Thus $G$ is a spanning subgraph of $H_{n,a}$.  If $s=1$, then \eqref{equ::3} gives that $d_{G-S}(x_{0}) \le \varepsilon(S)-1+a-b$. Note that $\varepsilon(S) \le 1$ by \eqref{equ::2} and the fact that $|S|=s=1$. Thus $d_{G-S}(x_{0}) \le a-b \le a-2$, and  $G$ is also a spanning subgraph of $H_{n,a}$.  As  $G\ncong H_{n,a}$, in both cases, we obtain $\rho(G)<\rho(H_{n,a})$, contrary to our assumption.

	{\flushleft {\it Case 2.} $t\geq 2$.}
	
	First we claim that $t\leq 2a+2$. By contradiction,  suppose that $t\geq 2a+3$. According to \eqref{equ::2} and \eqref{equ::3}, we have
	$\sum_{x \in T}d_{G-S}(x) \le 1+at-bs$. Let $T'=V(G)\setminus(S\cup T)$. Then
	\begin{equation}
		\begin{aligned}
		e(G)&=e(S)+e(S,T)+e(S,T')+e(T)+e(T,T')+e(T')\\
		 &\le \frac{s(s-1)}{2}+st+s(n-s-t)+\sum_{x \in T}d_{G-S}(x)+\frac{(n-s-t)(n-s-t-1)}{2}\\
		 &\le \frac{s(s-1)}{2}+st+s(n-s-t)+(1+at-bs)+\frac{(n-s-t)(n-s-t-1)}{2}\\
		&=\frac{(n-2)^{2}-n(2t-3)+t^{2}+t+2at-2bs+2st-2}{2}.
		\nonumber
		\end{aligned}
	\end{equation}
	Since $n\geq s+t$ and $t\geq 2a+3$, by Lemma \ref{lem::2}, we obtain
	\begin{equation}
		\begin{aligned}
			\rho(G)&\le \sqrt{2e(G)-n+1}\\
			&\le  \sqrt{(n-2)^{2}-2n(t-1)+t^{2}+t+2at-2bs+2st-1}\\
			&\le  \sqrt{(n-2)^{2}-2(s+t)(t-1)+t^{2}+t+2at-2bs+2st-1}\\
			&=\sqrt{(n-2)^{2}-(t^{2}-(2a+3)t+2(b-1)s+1)}\\
			&\le n-2\\
			&< \rho(H_{n,a}),
			\nonumber
		\end{aligned}
	\end{equation}
	contrary to our assumption. Hence, $t \le 2a+2$. Let $G_{1}= K_{4a+1} \nabla (K_{2} \cup K_{n-4a-3})$. We shall prove that $G$ is a spanning subgraph of $G_1$. In fact, by definition, every vertex in $T$ has degree at most $a$ in $G-S$. Furthermore, we assert that there exists some vertex $x_1\in T$ such that $d_{G-S}\leq a-1$, since otherwise we can deduce from \eqref{equ::3} that $\delta_G(S,T)=bs \le \varepsilon(S)-1$, which is impossible by \eqref{equ::2} and the fact that $b\geq 2$. As $|T|=t\geq 2$, we can choose $x_2\in T$ with $x_2\neq x_1$. Recall that $|S|=s\leq t\leq 2a+2$. Then we have
	$|(N_G(x_1)\setminus\{x_2\})\cup (N_G(x_2)\setminus\{x_1\})|\leq |S|+|(N_{G-S}(x_1)\setminus\{x_2\})\cup (N_{G-S}(x_2)\setminus\{x_1\})|\leq (2a+2)+(a-1)+a=4a+1$, and hence $G$ is a spanning subgraph of $G_1$. Combining this with Lemma \ref{lem::5}, we obtain
	$\rho(G)\leq \rho(G_1)\leq n-2<\rho(H_{n,a})$, contrary to our assumption.
	
Note that $H_{n,a}$ is not a fractional $[a,b]$-covered graph by Lemma \ref{lem::4}. Therefore, we conclude that $G$ is a fractional $[a,b]$-covered graph unless $G\cong H_{n,a}$. \qed

	{\flushleft \it Proof of Theorem \ref{thm::3}.}
	As in Theorem \ref{thm::2}, we have $q(G) \geq q(H_{n,a})>q(K_{n-1})=2n-4$ and  $G$ is connected. By contradiction, suppose that $G$ is not a fractional $[a, b]$-covered graph and $G\ncong H_{n,a}$. Then there exists some subset $S\subseteq V(G)$ satisfying \eqref{equ::3}, where $T=\{x:x\in V(G)\backslash S, d_{G-S}(x)\le a\}$ and $\varepsilon(S)$ is defined in \eqref{equ::2}. Let $s=|S|$ and $t=|T|$. We have $t>0$ and $s \le t$. If $t=1$, by the analysis in Theorem \ref{thm::2}, we deduce that $\rho(G)<\rho(H_{n,a})$, a contradiction. If $t\geq 2a+3$, as in  Theorem \ref{thm::2}, from Lemma \ref{lem::3} we obtain
	\begin{equation}
	\begin{aligned}
			q(G)&\le \frac{2e(G)}{n-1}+n-2\\
&\leq \frac{(n-2)^{2}-n(2t-3)+t^{2}+t+2at-2bs+2st-2}{n-1}+n-2\\
			&=  2n-4-\frac{2bs-t-2at+2n(t-1)-2st-t^{2}}{n-1}\\
&\le  2n-4-\frac{2bs-t-2at+2(s+t)(t-1)-2st-t^{2}}{n-1}\\
			&=  2n-4-\frac{t^{2}-(2a+3)t+2bs-2s}{n-1}\\
			&\le  2n-4\\
			&< q(H_{n,a}),
			\nonumber
		\end{aligned}
	\end{equation}
which is impossible. Hence, $2\leq t \le 2a+2$. Again by the analysis in Theorem \ref{thm::2}, we assert that $G$ is a spanning subgraph of $G_1=K_{4a+1} \nabla (K_{2} \cup K_{n-4a-3})$. Then, by Lemma \ref{lem::6},
	$q(G)\leq q(G_1)\leq 2n-4<q(H_{n,a})$, a contradiction. Therefore, we conclude that $G$ is a fractional $[a,b]$-covered graph unless $G\cong H_{n,a}$. \qed

	\section{Concluding remarks}

In this paper, we provide tight spectral radius conditions for a graph being fractional $[a,b]$-covered. In \cite{GL}, Liu and Zhang gave a necessary and sufficient condition for the existence of a fractional $[a,b]$-factor in a graph.
	\begin{thm}(Liu and Zhang \cite{GL})\label{thm::7}
		Let $b \ge a \ge 1$ be two integers, and $G$ be a graph. Then $G$ has a fractional $[a,b]$-factor if and only if for every subset S of V(G)
		$$b|S|-a|T|+\sum_{x\in T}d_{G-S}(x)\ge 0,$$
		where $T=\{x:x\in V(G)\backslash S, d_{G-S}(x)\le a\}$.
    \end{thm}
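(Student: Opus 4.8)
The plan is to prove the two implications separately. The forward direction (necessity) is a short weighted double-counting argument, whereas the reverse direction (sufficiency) is the substantial part; I would handle it by linear programming duality applied to the natural feasibility problem, argued by contraposition. Structurally this parallels the proof of the Li--Yan--Zhang characterization (Theorem \ref{thm::1}) with the deficiency threshold $\varepsilon(S)$ replaced throughout by $0$, since ``having a fractional $[a,b]$-factor'' is exactly the degenerate case of the covered condition in which no edge is required to be used fully.

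For necessity, suppose $G$ admits a fractional $[a,b]$-factor with indicator function $h$, fix an arbitrary $S\subseteq V(G)$, and set $T=\{x\in V(G)\setminus S: d_{G-S}(x)\le a\}$. For each $x\in T$ the lower bound $\sum_{e\in E_G(x)}h(e)\ge a$ decomposes as the $h$-weight of edges from $x$ into $S$ plus the $h$-weight of edges from $x$ staying inside $G-S$; since every value of $h$ is at most $1$, the latter is at most $d_{G-S}(x)$, so the weight sent from $x$ to $S$ is at least $a-d_{G-S}(x)\ge 0$. Summing over $x\in T$ shows that the total $h$-weight carried by edges between $T$ and $S$ is at least $a|T|-\sum_{x\in T}d_{G-S}(x)$. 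On the other hand, every such edge is incident to $S$, so this weight is at most $\sum_{s\in S}\sum_{e\in E_G(s)}h(e)\le b|S|$. Rearranging yields $b|S|-a|T|+\sum_{x\in T}d_{G-S}(x)\ge 0$, as required.

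For sufficiency I argue by contraposition: assuming $G$ has no fractional $[a,b]$-factor, I produce a set $S$ violating the inequality. Existence of the factor is precisely feasibility of the linear system $0\le h_e\le 1$ for all $e\in E(G)$ together with $a\le\sum_{e\in E_G(v)}h_e\le b$ for all $v\in V(G)$. Its constraint matrix is built from the vertex--edge incidence matrix, which is not totally unimodular when $G$ is non-bipartite, so one cannot reduce to an integral max-flow/min-cut computation; indeed, a single edge with $a=b=1$ already shows that the naive ``split-the-edge'' network solves a different (orientation) problem, where the edge capacity is shared between its endpoints rather than counted fully at each. I therefore invoke LP duality (Farkas' lemma) for the feasibility problem directly: infeasibility supplies nonnegative multipliers $p_v,q_v$ for the lower and upper degree constraints at each $v$, together with $r_e\ge 0$ for $h_e\le 1$ and $z_e\ge 0$ for $h_e\ge 0$, such that the edge-coefficients cancel, namely $(q_u+q_v)-(p_u+p_v)=z_e-r_e$ for every $e=uv$, while the constant $-a\sum_v p_v+b\sum_v q_v+\sum_e r_e$ is strictly negative.

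The main obstacle is the combinatorial extraction. I must show the dual certificate can be normalized to $0/1$ values, so that $S=\{v:q_v=1\}$ and $T=\{v:p_v=1\}$ form a genuine vertex partition, that $T$ then coincides with the low-$(G-S)$-degree set $\{x\in V(G)\setminus S:d_{G-S}(x)\le a\}$ forced by the statement, and that the active bound-multipliers satisfy $\sum_e r_e=\sum_{x\in T}d_{G-S}(x)$; the coefficient relation $(q_u+q_v)-(p_u+p_v)=z_e-r_e$ is exactly what pins down which edges between $S$ and $T$ must carry $r_e=1$. The rounding step should follow from an uncrossing/exchange argument on the support of an optimal dual solution, which cannot increase its value, and the negative constant then rearranges to $b|S|-a|T|+\sum_{x\in T}d_{G-S}(x)<0$, contradicting the hypothesis. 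I expect this rounding together with the identification of $T$ to be the delicate points, since that is precisely where the specific form of the deficiency term is produced; an equivalent route would replace LP duality by a direct contradiction argument, taking a maximum fractional partial factor and building an augmenting structure around the set where the lower degree bound fails.
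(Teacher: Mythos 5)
Note first that the paper contains no proof of this statement: Theorem \ref{thm::7} is imported verbatim from Liu and Zhang \cite{GL} and used as a black box, so there is no in-paper argument to compare yours against; your proposal has to stand on its own. Your necessity direction does: the weighted double count is correct and complete. For $x\in T$ the $h$-weight on edges from $x$ into $S$ is at least $a-d_{G-S}(x)\ge 0$, summing counts each $S$--$T$ edge exactly once since $S\cap T=\emptyset$, and bounding the same quantity above by $\sum_{s\in S}\sum_{e\in E_G(s)}h(e)\le b|S|$ is legitimate because the edges inside $S$ only add nonnegative terms. That half is done.

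The sufficiency direction, however, is a plan rather than a proof, and the gap sits exactly where the theorem's content lives. After invoking Farkas' lemma you must (i) normalize the multipliers $p,q$ to $\{0,1\}$ and (ii) show the resulting $T=\{v:p_v=1\}$ can be taken to be the canonical low-degree set with $\sum_e r_e=\sum_{x\in T}d_{G-S}(x)$; you defer both to an unspecified ``uncrossing/exchange argument'' that you only \emph{expect} to work. Two concrete comments. First, point (ii) is actually easy and needs no duality at all: for fixed $S$ and arbitrary $T'\subseteq V(G)\setminus S$ one has $b|S|-a|T'|+\sum_{x\in T'}d_{G-S}(x)=b|S|+\sum_{x\in T'}\bigl(d_{G-S}(x)-a\bigr)$, which is minimized by taking precisely the vertices with $d_{G-S}(x)-a<0$ (ties at $a$ change nothing), so any violating pair $(S,T')$ immediately yields a violating pair with the canonical $T$. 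You should prove sufficiency for the condition quantified over all disjoint pairs and then apply this one-line reduction, instead of trying to force the dual certificate itself to produce the canonical $T$. Second, point (i) is the genuinely hard step and your proposal gives no mechanism for it: as you yourself note, the incidence matrix of a non-bipartite graph is not totally unimodular, so integrality of an optimal dual solution is not automatic, and ``uncrossing'' is not known to apply off the shelf here. The standard repairs are either half-integrality of extreme points of graph-incidence systems followed by a rounding of the halves (fractional factor theorems have no parity corrections precisely because the halves can be absorbed), or the bipartite double cover $G\times K_2$: there the incidence matrix \emph{is} totally unimodular, the feasibility question becomes a max-flow/min-cut computation, and averaging the two copies of each edge returns a fractional $[a,b]$-factor of $G$ while the cut condition translates into the stated deficiency inequality. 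Your blanket dismissal of network-flow approaches (based on the naive split-edge network) overlooks this device, which is how proofs of this theorem are usually completed. Until the rounding step is actually carried out by one of these routes, the reverse implication remains unproved.
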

According to Theorem \ref{thm::7}, as in Lemma \ref{lem::4}, it is easy to see that the graph $H_{n,a}$ with $n\geq 2a+3$ has no $[a,b]$-factor. Since each fractional $[a,b]$-covered graph must contain a fractional $[a,b]$-factor, by Theorems \ref{thm::2} and \ref{thm::3}, we obtain the following two results, respectively.

	\begin{cor}\label{cor::1}
		Let $b \ge a \ge 2$ be two integers, and $G$ be a graph of order $n \geq 2+\sqrt{32a^2+24a+5}$. If $\rho (G) \geq \rho (H_{n,a})$, then $G$ has a fractional $[a,b]$-factor unless $G\cong H_{n,a}$.
    \end{cor}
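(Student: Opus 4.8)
The plan is to derive this corollary directly from Theorem~\ref{thm::2}, since the hypotheses on $a$, $b$, $n$, and $\rho(G)$ are word-for-word identical to those of that theorem. First I would invoke Theorem~\ref{thm::2} to conclude that, under the stated assumptions, $G$ is fractional $[a,b]$-covered unless $G\cong H_{n,a}$. It then remains only to translate the stronger \emph{covered} conclusion into the weaker \emph{has a factor} conclusion required here.

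The key observation is that every fractional $[a,b]$-covered graph contains a fractional $[a,b]$-factor. Indeed, by definition a fractional $[a,b]$-covered graph admits, for each edge $e$, a fractional $[a,b]$-factor with indicator function $h$ satisfying $h(e)=1$; in particular at least one such factor exists as soon as $G$ has an edge. Since $\rho(G)\geq \rho(H_{n,a})>\rho(K_{n-1})=n-2>0$, the graph $G$ is nonempty, so this proviso is met. Hence in the case $G\ncong H_{n,a}$ we immediately obtain a fractional $[a,b]$-factor of $G$, which is exactly the assertion of the corollary.

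The argument has essentially no obstacle once Theorem~\ref{thm::2} is in hand; the only point worth recording is that Theorem~\ref{thm::7} (applied exactly as in Lemma~\ref{lem::4}) shows the exceptional graph $H_{n,a}$ genuinely fails to have a fractional $[a,b]$-factor whenever $n\geq 2a+3$, a condition that holds here because $n\geq 2+\sqrt{32a^2+24a+5}$ forces $\sqrt{32a^2+24a+5}\geq 2a+1$ and hence $n\geq 2a+3$. Thus the exclusion of $H_{n,a}$ cannot be dropped, confirming that the statement is sharp. Were Theorem~\ref{thm::2} unavailable, the fallback would be to reprove the bound from scratch by repeating the spectral estimates in the proof of Theorem~\ref{thm::2} verbatim, but substituting the factor-existence criterion of Theorem~\ref{thm::7} for the covering criterion of Theorem~\ref{thm::1}; the case analysis and the bounds supplied by Lemmas~\ref{lem::2} and~\ref{lem::5} would carry over unchanged, so this direct route would reach the same conclusion at the cost of redoing the computations.
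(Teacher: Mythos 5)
Your proposal is correct and follows essentially the same route as the paper: the authors likewise deduce Corollary~\ref{cor::1} from Theorem~\ref{thm::2} via the observation that a fractional $[a,b]$-covered graph contains a fractional $[a,b]$-factor, and use Theorem~\ref{thm::7} (as in Lemma~\ref{lem::4}) to note that $H_{n,a}$ itself has none. Your extra checks --- that $G$ has an edge (so ``covered'' really yields a factor) and that $n\geq 2a+3$ follows from the hypothesis on $n$ --- are sound refinements of details the paper leaves implicit.
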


    \begin{cor}\label{cor::2}
    	Let $b \ge a \ge 2$ be two integers, and $G$ be a graph of order $n \geq 6a+5$. If $q (G) \geq q (H_{n,a})$, then $G$ has a fractional $[a,b]$-factor unless $G\cong H_{n,a}$
    \end{cor}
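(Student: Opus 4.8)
The plan is to obtain Corollary \ref{cor::2} as an immediate consequence of the signless Laplacian result in Theorem \ref{thm::3}, bridging from the property ``fractional $[a,b]$-covered'' to the weaker property ``having a fractional $[a,b]$-factor'' via a single elementary observation. The first step I would record is that every fractional $[a,b]$-covered graph automatically contains a fractional $[a,b]$-factor: by definition, for each edge $e$ of such a graph there is a fractional $[a,b]$-factor whose indicator function $h$ satisfies $h(e)=1$; fixing any one edge and simply discarding the extra constraint $h(e)=1$ leaves precisely a fractional $[a,b]$-factor. Thus the covered property is strictly stronger than the mere existence of a factor.

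The main deduction then runs as follows. Under the hypotheses $n\geq 6a+5$ and $q(G)\geq q(H_{n,a})$, Theorem \ref{thm::3} guarantees that $G$ is fractional $[a,b]$-covered unless $G\cong H_{n,a}$. Combining this with the observation above, whenever $G\ncong H_{n,a}$ the graph $G$ is fractional $[a,b]$-covered and hence possesses a fractional $[a,b]$-factor, which is exactly the conclusion sought.

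Finally, to confirm that the exceptional graph is genuinely exceptional---so that the clause ``unless $G\cong H_{n,a}$'' cannot be dropped---I would verify directly that $H_{n,a}$ has no fractional $[a,b]$-factor. This uses Theorem \ref{thm::7} (the Liu--Zhang criterion) with the very same choice of sets as in Lemma \ref{lem::4}: taking $S=\emptyset$ and $T=V(K_1)$, one computes $b|S|-a|T|+\sum_{x\in T}d_{G-S}(x)=-a+(a-1)=-1<0$, so the criterion fails. Since $n\geq 6a+5\geq 2a+3$ ensures $T$ captures the relevant low-degree vertex, this computation is valid and shows $H_{n,a}$ admits no fractional $[a,b]$-factor.

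I do not expect any genuine obstacle here: the entire analytic content---the spectral extremal estimate and the case analysis on the Tutte-type pair $(S,T)$---is already carried out in the proofs of Lemmas \ref{lem::5}--\ref{lem::6} and of Theorem \ref{thm::3}. The only points requiring (minor) care are the elementary implication ``covered $\Rightarrow$ has a factor'' and the accompanying verification that the single exceptional graph $H_{n,a}$ really lacks a factor; together these are what make the corollary's statement sharp rather than vacuous.
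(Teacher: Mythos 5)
Your proposal is correct and follows essentially the same route as the paper: the authors likewise deduce Corollary \ref{cor::2} by combining Theorem \ref{thm::3} with the observation that every fractional $[a,b]$-covered graph contains a fractional $[a,b]$-factor, and they confirm sharpness by noting (via Theorem \ref{thm::7}, exactly as in Lemma \ref{lem::4} with $S=\emptyset$ and $T=V(K_1)$) that $H_{n,a}$ has no fractional $[a,b]$-factor. Your computation $-a+(a-1)=-1<0$ matches the paper's, so there is nothing to add.
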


%

\end{document}